\newtheorem{theorem}{Theorem}[section]
\newtheorem{definition}[theorem]{Definition}
\newtheorem{lemma}[theorem]{Lemma}
\newtheorem{proposition}[theorem]{Proposition}
\newtheorem{cor}[theorem]{Corollary}
\newtheorem{question}{Question}
\newtheorem{remark}{Remark}
\numberwithin{equation}{section}
\DeclareMathOperator{\sys}{sys}
\DeclareMathOperator{\arcsinh}{arcsinh}
\DeclareMathOperator{\Cat}{Cat}
\DeclareMathOperator{\Crit}{Crit}
\definecolor{mygreen}{RGB}{28,172,0} 
\definecolor{mylilas}{RGB}{170,55,241}
\begin{document}

\lstset{language=Matlab,
    breaklines=true,
    morekeywords={matlab2tikz},
    keywordstyle=\color{blue},
    morekeywords=[2]{1}, keywordstyle=[2]{\color{black}},
    identifierstyle=\color{black},
    stringstyle=\color{mylilas},
    commentstyle=\color{mygreen},
    showstringspaces=false,
    numbers=left,
    numberstyle={\tiny \color{black}},
    numbersep=9pt, 
	xleftmargin=1cm
}

\title{An upperbound for the number of critical points of the systole function on the surface moduli space }

%
\author{Yue Gao}
\address{BICMR, Peking University, Beijing 100871, CHINA}
\email{yue\_gao@pku.edu.cn}

%
\begin{abstract}
		We obtain an upper bound for the number of critical points of the systole function on $\mathcal{M}_g$. 
		Besides, we obtain an upper bound for the number of those critical points whose systole is smaller than a constant. 
\end{abstract}

\date{}

\maketitle

\tableofcontents

\section{Introduction}


The systole of a closed hyperbolic surface is the length of the shortest geodesic on the surface. It can also be treated as a function on the Teichm\"uller space $\mathcal{T}_g$, denoted as $\sys_g: \mathcal{T}_g \to \mathbb{R}_+$. The systole function is also well-defined on the moduli space $\mathcal{M}_g$. 

The systole function on $\mathcal{T}_g$ or $\mathcal{M}_g$ has been shown to be a topological Morse function (for its definition, see Section \ref{sec_pre}) by Akrout \cite{akrout2003singularites}. Similarly to a Morse function, a topological Morse function has regular and critical points, and 
critical points of a topological Morse function on a space provide information of the space's topology. For example, let $f:M\to \mathbb{R}$ be a Morse function on a differential manifold $M$ (or a topological Morse function $f: M \to \mathbb{R}$ on a topological manifold $M$ ), then the Euler characteristic $\chi(M)$ of $M$ is given by 
\[
		\chi(M) = \sum_k (-1)^k\#\left\{ \text{critical points of }f\text{ with index }k \right\}.
\]


\begin{remark}
		For the moduli space $\mathcal{M}_g$, Harer and Zagier obtained its orbifold Euler characteristic: 
		\[
				\chi(\mathcal{M}_g) = \frac{B_{2g}}{4g(g-1)} 
		\]
		in \cite{harer1986euler}. Here $B_{2g}$ is the $2g$-th Bernoulli number. 
\end{remark}

The study on critical points of systole function initiated in a series of work by Schmutz \cite{schmutz1993reimann}, \cite{schmutz1994systoles}, \cite{schaller1998geometry}, \cite{schaller1999systoles}. He proved that systole is a topological Morse function on the space of cusped surfaces before Akrout and classified critical points of systole function on genus $2$ moduli space $\mathcal{M}_2$. Besides he provided several series of critical points, including locally maximal points. 

Recently, Fortier-Bourque and Rafi obtained new examples of critical points (including locally maximal points) of systole function \cite{fortier2019hyperbolic} \cite{fortier2020local}. Part of their examples have relatively large systole and part of theirs have small indices. Critical points with such properties had not been known before. 

Examples of critical points and the Euler characteristic $\chi(\mathcal{M}_g)$ provide lower bounds for the number of critical points of systole function. The largest lower bound is given by $\chi(\mathcal{M}_g)$ and asymptotic to $\frac{\sqrt{\pi }}{\sqrt{g}(g-1)}\left(\frac{g}{\pi e}\right)^{2g}$. Some examples offer lower bound for number of critical points with some restrictions. 

The main result of this paper is an upper bound of this number. 
We let $\sys_g$ be the systole function and $\Crit(sys_g)$ be the set of its critical points. 
		 For $L>0$ we let  
		 \[
				 \Crit(\sys_g)^{\le L} = \left\{ \Sigma\in \Crit(\sys_g)\subset \mathcal{M}_g|
		 \sys(\Sigma)\le L\right\}. 
		 \]
		 Then we get an upper bound for $|\Crit(\sys_g)^{\le L}|$ when $g\ge 2$ (Proposition \ref{prop_upper_bound}). When $g$ is sufficiently large, we have

\setcounter{section}{3}
\setcounter{theorem}{4}
 \begin{theorem}
		 If $g$ is large enough then the number of points in $\Crit(\sys_g)$ on $\mathcal{M}_g$ is bounded from above by
		 \[
				 |\Crit(\sys_g)| \le (6g)^{Cg^{\frac{7}{2}}}. 
		 \]
		 Here $C= 6054$. 
		 \label{thm_upper}
 \end{theorem}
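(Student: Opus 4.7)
The strategy is to deduce Theorem \ref{thm_upper} from Proposition \ref{prop_upper_bound} by supplying a universal, $g$-dependent upper bound on the systole at every point of $\mathcal{M}_g$. Since the proposition already controls $|\Crit(\sys_g)^{\le L}|$ for arbitrary $L$ and $g \ge 2$, the missing ingredient is an explicit $L_g = O(\log g)$ for which $\Crit(\sys_g) = \Crit(\sys_g)^{\le L_g}$; substituting $L = L_g$ into the proposition and simplifying should then produce the claimed bound.

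First I would establish such a systole bound by a standard area-comparison argument. Picking a point $p$ on a shortest geodesic of $\Sigma \in \mathcal{M}_g$ makes the injectivity radius at $p$ equal to $\sys_g(\Sigma)/2$, so the open disk of this radius embeds in $\Sigma$. Comparing its hyperbolic area $2\pi(\cosh(\sys_g(\Sigma)/2) - 1)$ with $\mathrm{Area}(\Sigma) = 4\pi(g-1)$ given by Gauss--Bonnet yields
\[
		\sys_g(\Sigma) \le 2\arccosh(2g-1) \le 2\log(4g-2) =: L_g.
\]
In particular every point of $\Crit(\sys_g)$ has systole at most $L_g$, so $\Crit(\sys_g) = \Crit(\sys_g)^{\le L_g}$ and Proposition \ref{prop_upper_bound} applies with this choice of $L$.

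The remaining task is to verify that the resulting upper bound is at most $(6g)^{6054 g^{7/2}}$ once $g$ is large. Because $L_g = O(\log g)$, any $L$-dependent factor produced by the proposition contributes only a $\log$-level perturbation inside the outer exponent, so the dominant scaling comes from the explicitly $g$-dependent part of the proposition's bound. I expect the main obstacle to be this final asymptotic bookkeeping: tracking the constants implicit in the proposition together with the $2\log(4g-2)$ from the systole bound, and checking that they can all be absorbed into the single constant $C = 6054$ for $g$ sufficiently large. No new geometric input should be required beyond the universal systole bound above and the content of Proposition \ref{prop_upper_bound}; the argument is essentially a substitution followed by an elementary asymptotic estimate.
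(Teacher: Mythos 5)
Your overall strategy matches the paper's exactly: pick a universal upper bound $L_g$ on the systole, observe $\Crit(\sys_g)=\Crit(\sys_g)^{\le L_g}$, substitute into Proposition \ref{prop_upper_bound}, and simplify. The crucial discrepancy is the value of $L_g$. You correctly derive from the embedded-disk area argument that
\[
2\pi\bigl(\cosh(\sys/2)-1\bigr)\le 4\pi(g-1)
\quad\Longrightarrow\quad
\sys \le 2\arccosh(2g-1) \le 2\log(4g-2),
\]
so $e^{L_g}\le (4g-2)^2\approx 16g^2$. The paper instead asserts $\sys\le\log(2g^2)$, i.e.\ $e^{L}\le 2g^2$, and attributes this to ``area comparison.'' That bound does \emph{not} follow from the area argument (one has $2g^2 < (4g-2)^2$ for all $g\ge 1$), and it is in fact false at $g=2$: the Bolza surface has $\sys = 2\arccosh(1+\sqrt 2)\approx 3.057 > \log 8 \approx 2.079$. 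So you have implicitly caught an error in the paper; but you then assert that you will arrive at the same constant $C=6054$, and this is where your argument breaks.

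The gap is in the claim that ``any $L$-dependent factor contributes only a log-level perturbation inside the outer exponent.'' That is not true: the multiplicative constant in $e^{L}$ enters the outer exponent \emph{multiplicatively}. Concretely, the dominant factor in \eqref{for_crit} is $(\lceil\deg_0\rceil!)^{|V_0|}\lesssim \deg_0^{\deg_0|V_0|}$ with $\deg_0=2e^{L/4}$ and $|V_0|=2545\,g\,e^{L}$, so the exponent $\deg_0\cdot|V_0|$ scales like $5090\,g^{7/2}\,e^{5L/4}$. Replacing $e^{L}=2g^2$ by $e^{L}=16g^2$ multiplies $e^{5L/4}$ by $8^{5/4}\approx 13.45$, so instead of $\approx 12100\,g^{7/2}$ you get $\approx 163000\,g^{7/2}$ in the exponent, and the resulting bound is roughly $(16g)^{81000\,g^{7/2}}$ rather than $(6g)^{6054\,g^{7/2}}$. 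Thus with your (correct) choice of $L_g$ the stated constant $C=6054$ cannot be recovered; either the theorem's constant must be enlarged, or a genuinely sharper systole bound than area comparison must be invoked and justified. You should redo the explicit bookkeeping with $e^{L_g}\le(4g-2)^2$ rather than asserting agreement with the paper's figure.
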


 Also we have a theorem on how the critical points distribute on $\mathcal{M}_g$. 

 \begin{theorem}
		 When $g$ is sufficiently large, then 
		 \[
				 |\Crit(\sys_g)^{\le L}| 
				 \le \left ( 2^{C_1 ge^L}\cdot 2^{C_2 ge^{\frac{5L}{4}}}\right ) \cdot \left( (e^{L})^{C_3ge^L}\cdot (e^{L})^{C_4ge^{\frac{5L}{4}}} \right) \cdot g^{C_3ge^L}. 
		 \]
		 Here $C_1 = 200000$, $C_2 = 5040$, $C_3 = 15400$, and $C_4 = 1300$. 
		 \label{thm_const}
 \end{theorem}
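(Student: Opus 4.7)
The plan is to deduce Theorem~\ref{thm_const} directly from Proposition~\ref{prop_upper_bound}. The proposition already supplies an explicit upper bound on $|\Crit(\sys_g)^{\le L}|$ valid in the full range $g\ge 2$; what remains is to simplify that bound into the clean six-factor form in the statement, with the stated explicit constants $C_1,\ldots,C_4$, once $g$ is sufficiently large. No new geometric input is needed at this stage — the work is arithmetic absorption of subdominant terms.

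I would proceed as follows. Taking logarithms factor by factor in the proposition's bound, the result splits into a sum of contributions in three natural bases: the base $2$ (coming from counting subsets of a finite set of short multicurves and from binary topological alternatives at a critical point), the base $e^L$ (coming from Bers/Buser type estimates on the number of closed geodesics of length at most $L$), and the base $g$ (from enumerating topological types of pants decompositions, of relevant subsurfaces, and from the Morse-theoretic step of Akrout counting the possible index configurations). Each of these log-contributions has a leading coefficient of order $ge^L$ or $ge^{5L/4}$, plus remainders of lower order in $g$ and $e^L$ such as $O(g\log g)$, $O(e^L)$, or $O(1)$.

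Next I would identify the explicit leading coefficients $c_1,c_2,c_3,c_4$ produced by the proposition along the four allowed templates (the $ge^L$-scale for bases $2$ and $g$, the $ge^{5L/4}$-scale for base $2$, the $ge^L$-scale and $ge^{5L/4}$-scale for base $e^L$). Fixing the theorem's constants as $C_1=200000$, $C_2=5040$, $C_3=15400$, $C_4=1300$, a strictly positive slack $C_i-c_i$ remains in each template. A direct threshold calculation then shows that for $g$ larger than some explicit $g_0$, the slack on each scale dominates every remainder term produced above, so the proposition's bound is engulfed by the six-factor expression, proving the theorem.

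The main obstacle is organizational bookkeeping rather than new ideas: every contribution in the proposition's bound has to be assigned unambiguously to one of the six factors on the right of Theorem~\ref{thm_const}. The subtlest point is the $g$-base, which in the theorem appears only with the $ge^L$ scale. Any contribution of the form $g^{\,\alpha\, ge^{5L/4}}$ produced by the proposition must first be rewritten as $2^{\alpha\, ge^{5L/4}\log_2 g}$ and then absorbed into the $2^{C_2\, ge^{5L/4}}$ factor, which forces $C_2$ to dominate $\alpha\log_2 g$ for $g$ large; this is why $C_2$ is so much larger than the leading coefficient of a pure $ge^{5L/4}$ contribution would suggest. Analogous absorptions — of $(\log g)$-factors, Bers-constant factors, and polynomial-in-$g$ prefactors — into each of the remaining five factors pin down the other $C_i$. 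Once all six absorption inequalities are verified simultaneously, the conclusion holds with $g_0$ taken as the maximum of the individual thresholds, which is the ``sufficiently large $g$'' of the hypothesis.
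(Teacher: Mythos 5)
Your overall route is the intended one: Theorem \ref{thm_const} is deduced purely arithmetically from Proposition \ref{prop_upper_bound} by substituting $|V_0|=2545ge^{L}$, $g_{\Gamma_0}=7700ge^{L}$, $\deg_0=2e^{L/4}$ and regrouping into powers of $2$, $e^{L}$ and $g$. But the proposal never carries out that computation, and where it does commit to specifics it misdescribes the bound and invokes a step that would fail. The proposition's right-hand side consists of exactly three explicit factors: $\Cat(|V_0|-1)$, the quotient $(|V_0|)^{2g_{\Gamma_0}}$ divided by a factorial (which one simply discards), and $(\lceil\deg_0\rceil!)^{|V_0|}$. There are no contributions from ``subsets of short multicurves,'' Bers/Buser-type geodesic counts, pants decompositions, or Akrout index configurations --- none of these objects appear in the bound, so the three-base decomposition you describe is not attached to anything in the proposition, and the leading coefficients $c_i$ you promise to ``identify'' are never produced. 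Since the entire content of the theorem is the explicit constants $C_1,\dots,C_4$, omitting this verification leaves the proof empty.

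More seriously, the one mechanism you describe in detail is invalid. You claim a contribution of the form $g^{\alpha ge^{5L/4}}$ would be rewritten as $2^{\alpha ge^{5L/4}\log_2 g}$ and absorbed into $2^{C_2 ge^{5L/4}}$, ``which forces $C_2$ to dominate $\alpha\log_2 g$ for $g$ large.'' No fixed constant $C_2$ can dominate $\alpha\log_2 g$ as $g\to\infty$, so if such a term existed the theorem's format would be unattainable; the argument only works because no such term arises. The sole $ge^{5L/4}$-scale exponent comes from $(\lceil\deg_0\rceil!)^{|V_0|}\le\deg_0^{\deg_0|V_0|}=\bigl(2e^{L/4}\bigr)^{2e^{L/4}\cdot 2545ge^{L}}$, whose base is $2e^{L/4}$, not $g$; splitting that base yields the factors $2^{\,\approx 5090\,ge^{5L/4}}$ and $(e^{L})^{\,\approx 1273\,ge^{5L/4}}$, which is the true origin of $C_2$ and $C_4$ --- not the absorption of a $\log_2 g$ factor. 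The remaining factors come from $\Cat(|V_0|-1)\le 4^{|V_0|-1}$ (this Catalan estimate is the only place ``$g$ sufficiently large'' is used) and from $(|V_0|)^{2g_{\Gamma_0}}=(2545ge^{L})^{15400ge^{L}}\le(2^{12})^{15400ge^{L}}\,(e^{L})^{15400ge^{L}}\,g^{15400ge^{L}}$, giving $C_1$ and $C_3$; no delicate threshold-balancing of remainder terms of size $O(g\log g)$ or $O(e^{L})$ is needed or present. To make the proof correct, replace the speculative bookkeeping with this direct substitution and regrouping.
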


\setcounter{section}{1}
\setcounter{theorem}{0}


Here is a comparison of our upper bounds with some known lower bounds. Our upper bound for $|\Crit(\sys_g)^{\le L}|$ is $(Cg)^{C'g}$ as $g$ grows for some $C=C(L)>0$ and $C'=C'(L)>0$. On the other hand, there is a sereis $\left\{ g_k \right\}$, $g_k\to \infty$ as $k\to \infty$, and there is a known lower bound for $|\Crit(\sys_{g_k})^{\le L}|$ , also is $(Cg_k)^{C'g_k}$ as $k$ grows for some other $C,C'>0$ (see \cite[Theorem B]{fortier2020local}, Theorem \ref{thm_rafi} and Corollary \ref{cor_const}). 
 As for $|\Crit(\sys_g)|$, the gap between our upper bound ($(Cg)^{C'g^{\frac{7}{2}}}$) and the lower bound implied by $\chi(\mathcal{M}_g)$ is not that small. 
 Currently we still do not have much knowledge on critical points with large systole to obtain a sharper estimate to $|\Crit(\sys_g)|$. 


 In an unpublished manuscript \cite{thurston1986spine}, Thurston claimed that let $X_g\subset \mathcal{M}_g$ be the space consisting of surfaces with filling shortest geodesics, then $X_g$ is a spine (deformation retract) of  $\mathcal{M}_g$ (See also \cite{ji2014well} \cite{fortier2019hyperbolic}, \cite{fortier2020local}). Here by filling we mean that the union of the shortest geodesics cut the surfaces into polygonal disks. Unfortunately, recently his proof was shown incomplete \cite{ji2014well}. 
 Thurston's claim is still unsolved. 
  Critical points of the systole function are contained in  $X_g$ \cite[Corollary 20]{schaller1999systoles}. and our result characterizes critical points of the systole function. 

 We use two combinatorial properties in \cite{schaller1999systoles} (Corollary 20, 21) to obtain our result. These two properties tell us that by bounding number of filling graph (with restrictions to number of vertices, edges and degree at each vertex) from above, we can bound $|\Crit(\sys_g)|$ from above. 

 The method to bound number of filling graphs on surfaces from above is similar to Kahn-Markovic's method to bound number of triangulations on surfaces from above in \cite{kahn2012counting}. 

 In Section \ref{sec_pre}, some priliminaries we need in our proof are offered. Then we prove Theorem \ref{thm_upper}  and Theorem \ref{thm_const} in Section \ref{sec_upper}. 
 In Section \ref{sec_questions}, we give some further questions to consider. 

 {\bf Acknowledgement: } The author would like to acknowledge Prof. Yi Liu for many helpful discussions.

 \section{Preliminaries}
 \label{sec_pre}

 \subsection{Topological Morse function }
 Let $M^n$ be a $n$-dimensional topological manifold. 

 \begin{definition}
		 A function $f:M^n\to \mathbb{R}$ is a topological Morse function if at each point $p\in M$, there is a neighborhood $U$ of $p$ and a map $\psi: U\to \mathbb{R}^n$. Here $\psi$ is a homeomorphism between $U$ and its image, such that $f\circ \psi^{-1}$ is either a linear function or \[
				 f\circ \psi^{-1}((x_1,x_2,\dots,x_n)) = f(p) -x_1^2-x_2^2-\dots-x_j^2+x_{j+1}^2+\dots+x_n^2.
		 \]
		 In the former case, the point $p$ is called a regular point of $f$, while in the latter case the point $p$ is called a singular point with index $j$. 
 \end{definition}

 \subsection{Teichm\"uller space and length function}

We denote by $\mathcal{T}_g$  the Teichm\"uller space consisting of marked hyperbolic surface with genus $g$, and $\mathcal{M}_g$ to be the moduli space consisting of hyperbolic surface with genus $g$. It is known that 
\[
		\mathcal{M}_g \cong \mathcal{T}_g/\Gamma_g.
\] Here $\Gamma_g$ is the mapping class group. 


For $\Sigma\in \mathcal{T}_g$, 
the set of all the shortest geodesics is denoted by $S(\Sigma)$. The length of an essential curve $\alpha$ on $\Sigma$ is denoted by $l_\alpha(\Sigma)$. For $\alpha\in S(\Sigma)$, 
\[
		l_\alpha(\Sigma)\le l_\beta(\Sigma), \forall \text{ simple closed geodesic } \beta\subset \Sigma. 
\]

  Length of the shortest geodesics can be treated as a function on $\mathcal{T}_g$, we denote it as $\sys_g$ or shortly $\sys$. Obviously for $\alpha\in S(\Sigma)$
 \[
		 \sys(\Sigma) = l_\alpha(\Sigma) = \inf_{\forall\text{ simple closed geodesic }\beta \subset \Sigma }  l_\beta(\Sigma). 
 \]


 Since systole is an invariant function on $\mathcal{T}_g$ under the action of the mapping class group, systole function can be also defined as a function on $\mathcal{M}_g$. 
\begin{eqnarray*}
		\sys: \mathcal{M}_g &\to & \mathbb{R}^+\\
		\Sigma & \mapsto &\sys(\Sigma).
\end{eqnarray*}
Akrout showed that 

 \begin{cor}[\cite{akrout2003singularites}, Corollary]
		 The systole function is a topological Morse function on $\mathcal{T}_g$.  
 \end{cor}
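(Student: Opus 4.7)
The plan is to invoke Akrout's general theorem on generalized systole functions, which asserts that the pointwise infimum of a locally finite family of smooth functions with positive-definite Hessians is a topological Morse function. Accordingly, the task reduces to exhibiting the systole as such an infimum on $\mathcal{T}_g$ and verifying the two hypotheses.

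First I would set up the family. Let the index set $I$ consist of all free homotopy classes $\alpha$ of essential simple closed curves on a reference surface $\Sigma_g$, and let $l_\alpha: \mathcal{T}_g \to \mathbb{R}^+$ be the geodesic length function. By the very definition of $\sys$ in the previous subsection,
\[
\sys(\Sigma) = \inf_{\alpha \in I} l_\alpha(\Sigma).
\]
Smoothness (in fact real-analyticity) of each $l_\alpha$ on $\mathcal{T}_g$ is classical and may be read off from explicit trace formulas in Fenchel--Nielsen coordinates.

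Next I would verify the local finiteness condition required by Akrout. Given $\Sigma_0 \in \mathcal{T}_g$, choose a neighborhood $U$ of $\Sigma_0$ on which lengths vary by a bounded multiplicative factor (length functions are continuous, even Lipschitz in suitable coordinates). For any $K>0$, every closed hyperbolic surface carries only finitely many closed geodesics of length at most $K$, a standard consequence of the thick--thin decomposition and collar lemma. Combining this with the multiplicative control on $U$, the set $\{\alpha \in I : l_\alpha(\Sigma) \le K \text{ for all } \Sigma \in U\}$ is finite, as required.

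The key geometric input is positive-definiteness of the Hessian of each $l_\alpha$. I would appeal to Wolpert's convexity theorem: the geodesic length function of any simple closed curve has positive-definite Hessian everywhere on $\mathcal{T}_g$. (Equivalently, $l_\alpha$ is strictly geodesically convex for the Weil--Petersson metric.) This is the only nonformal step, and is the step I expect to be the main obstacle to a self-contained exposition — all the other ingredients are soft. Once positive-definiteness is in hand, Akrout's theorem applies verbatim to the family $\{l_\alpha\}_{\alpha \in I}$ and yields that $\sys_g$ is a topological Morse function on $\mathcal{T}_g$, completing the proof.
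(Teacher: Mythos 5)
Your reconstruction is correct and matches the route that the paper implicitly relies on: the statement is cited from Akrout, whose main theorem is precisely that the pointwise infimum of a locally finite family of length functions with positive-definite Hessians is a topological Morse function, and the application to $\mathcal{T}_g$ goes through the three steps you name (setting up $\{l_\alpha\}$, local finiteness via discreteness of the length spectrum plus Lipschitz control, and Wolpert's Weil--Petersson convexity for the Hessian condition). One small point worth noting for completeness: it suffices to range over simple closed curves because the shortest closed geodesic on a closed hyperbolic surface is automatically simple, so restricting the index set to simple curves, as you do, does not change the infimum.
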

 Therefore systole function is also a topological Morse function on $\mathcal{M}_g$.  


 The set of all the critical points of $\sys_g$ in $\mathcal{M}_g$ is denoted by $\Crit(\sys_g)$. The number of the critical points in $\mathcal{M}_g$, is denoted by $|\Crit(\sys_g)|$. 

 \subsection{Filling curves}
 \begin{definition}
		 For a closed surface $\Sigma$, we assume $F=\{ \alpha_1,\alpha_2,\dots,\alpha_n\}$ is a finite set of essential simple closed curves. $F$ is said to be filling if and only if each component of $\Sigma \backslash F$ is a disk. 
 \end{definition}

 We need criteria regarding critical points of the systole function provided by the following two corollaries in \cite{schaller1999systoles}. 

 \begin{cor}[\cite{schaller1999systoles} Corollary 20]
		 If $\Sigma \in \mathcal{T}_g$ is a critical point of the systole function, then $S(\Sigma)$ is a filling set.  
		 \label{cor_schmutz_fill}
 \end{cor}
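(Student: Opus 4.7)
The plan is to prove the contrapositive by invoking Akrout's characterization of critical points as eutactic points. Recall from Akrout's work that $\Sigma$ is a critical point of $\sys_g$ if and only if $0$ lies in the interior of the convex hull of $\{dl_\alpha|_\Sigma : \alpha \in S(\Sigma)\} \subset T^*_\Sigma \mathcal{T}_g$. A necessary consequence of this interior containment is that the differentials $dl_\alpha|_\Sigma$ linearly span $T^*_\Sigma \mathcal{T}_g$; equivalently, no nonzero $v \in T_\Sigma \mathcal{T}_g$ can satisfy $dl_\alpha|_\Sigma(v) = 0$ for every $\alpha \in S(\Sigma)$. The strategy is therefore to exhibit such a $v$ whenever $S(\Sigma)$ fails to fill.

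Suppose $S(\Sigma)$ is not filling. Then the complement $\Sigma \setminus \bigcup_{\alpha \in S(\Sigma)} \alpha$ has a component $\Omega$ which is not an open disk, so $\overline{\Omega}$ is a subsurface with piecewise-geodesic boundary whose relative Teichm\"uller space --- hyperbolic structures with prescribed boundary lengths --- is positive-dimensional. I would produce a one-parameter family $\{\Sigma_t\}_{|t|<\varepsilon} \subset \mathcal{T}_g$ that modifies the hyperbolic structure only inside $\Omega$, fixes the lengths of the boundary components of $\Omega$, and leaves the complement $\Sigma \setminus \Omega$ unchanged; then set $v := \frac{d}{dt}\big|_{t=0}\Sigma_t$, which will be nonzero by the positive-dimensionality above. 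Since every $\alpha \in S(\Sigma)$ either avoids $\Omega$ or appears as a boundary component of $\Omega$, the length $l_\alpha(\Sigma_t)$ is constant in $t$, and hence $dl_\alpha|_\Sigma(v) = 0$ for every $\alpha \in S(\Sigma)$. This contradicts the spanning condition, so $\Sigma$ cannot be critical.

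The main technical obstacle will be realising $v$ as a genuine tangent vector in $T_\Sigma \mathcal{T}_g$, since one cannot naively glue arbitrary hyperbolic structures across $\partial \Omega$. A clean workaround is to extend $S(\Sigma) \cup \partial \Omega$ to a pants decomposition of $\Sigma$ and work in Fenchel--Nielsen coordinates: varying only those length and twist coordinates whose associated curves lie in the interior of $\Omega$ yields a bona fide deformation in $\mathcal{T}_g$ which preserves every $\partial \Omega$-length (and hence every $l_\alpha$ for $\alpha \in S(\Sigma)$). A secondary point to verify is that the shortest geodesics which do sit on $\partial \Omega$ remain simple closed geodesics of the same length throughout the family; this is automatic because these lengths are precisely the Fenchel--Nielsen length coordinates held fixed.
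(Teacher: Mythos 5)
The paper itself does not prove this statement; it cites \cite{schaller1999systoles}, Corollary~20 directly, so there is no internal proof to compare against. Your high-level strategy (contrapositive via Akrout's eutacticity criterion, noting that eutacticity forces the $dl_\alpha$ to span the cotangent space, and then producing a nonzero $v$ annihilated by all $dl_\alpha$ when $S(\Sigma)$ fails to fill) is sound and is the standard route to this result.

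However, the concrete construction of $v$ has a genuine gap. You claim that each $\alpha \in S(\Sigma)$ ``either avoids $\Omega$ or appears as a boundary component of $\Omega$,'' and you later propose extending $S(\Sigma) \cup \partial\Omega$ to a pants decomposition. Neither step works. First, $\partial\Omega$ is only \emph{piecewise} geodesic: it is a concatenation of arcs of several shortest geodesics meeting at corners (the vertices of the graph $\Gamma$), so an individual $\alpha$ is generally not a boundary component of $\Omega$ and $\partial\Omega$ is not a union of simple closed geodesics. Second, and more seriously, the curves in $S(\Sigma)$ intersect one another --- at a critical point $|S(\Sigma)| \ge \dim\mathcal{T}_g + 1 = 6g-5$, far exceeding the $3g-3$ disjoint curves a pants decomposition can hold --- so $S(\Sigma)$ cannot sit inside any pants decomposition, and the proposed Fenchel--Nielsen workaround collapses. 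The correct and simpler fix is this: failing to fill is equivalent to the existence of an \emph{essential} simple closed curve $\gamma$ disjoint from $\bigcup_{\alpha\in S(\Sigma)}\alpha$ (take an essential curve in a non-disk complementary component). The Fenchel--Nielsen twist along $\gamma$ alone provides the desired deformation: its tangent vector $v$ is nonzero, and since every $\alpha\in S(\Sigma)$ is disjoint from $\gamma$, each $l_\alpha$ is twist-invariant, so $dl_\alpha|_\Sigma(v)=0$ for all $\alpha\in S(\Sigma)$, contradicting eutacticity. You should replace your ``workaround'' paragraph with this twist argument.
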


 \begin{definition}
		 If $F$ and $F'$ are filling sets on $\Sigma$ and $\Sigma'$ respectively, then $(\Sigma,F)$ is  {\emph equivalent} to $(\Sigma',F')$  if and only if there is a homeomorphism from $\Sigma$ to $\Sigma'$ that maps the elements of $F$ to those of $F'$. 
 \end{definition}

 \begin{cor}[\cite{schaller1999systoles} Corollary 21]
		 For $\Sigma, \Sigma'\in \mathcal{T}_g$, if $(\Sigma,S(\Sigma))$, $(\Sigma',S(\Sigma'))$ are equivalent then $\Sigma$ and $\Sigma'$ cannot both be critical points of the systole function. 
		 \label{cor_schmutz_unique}
 \end{cor}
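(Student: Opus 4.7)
The plan is to argue by contradiction, combining Akrout's eutacticity characterization of critical points of $\sys_g$ with Wolpert's strict convexity of geodesic length functions along Weil--Petersson geodesics. Suppose for contradiction that $\Sigma_0 := \Sigma$ and $\Sigma_1 := \Sigma'$ are both critical, and that $(\Sigma_0, S(\Sigma_0))$ is equivalent to $(\Sigma_1, S(\Sigma_1))$ via a homeomorphism $\phi$ sending $S(\Sigma_0)$ bijectively to $S(\Sigma_1)$.

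First I would normalize by the mapping class group. Since $\sys_g$ and the notion of critical point are $\Gamma_g$-invariant, replacing $\Sigma_1$ by its pullback under the mapping class represented by $\phi$ reduces the problem to the case that, as isotopy classes on a common base surface, $S(\Sigma_0) = S(\Sigma_1) =: \{\alpha_1,\dots,\alpha_n\}$, with both points still critical. We may also assume $\Sigma_0 \neq \Sigma_1$ in $\mathcal{T}_g$, since otherwise there is nothing to prove. By definition of $S$, we then have $l_{\alpha_i}(\Sigma_k) = \sys(\Sigma_k)$ for every $i$ and each $k \in \{0,1\}$.

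Next I would connect $\Sigma_0$ to $\Sigma_1$ by the (unique) Weil--Petersson geodesic $\Sigma_t$, $t \in [0,1]$, which exists by Wolpert's geodesic convexity of $\mathcal{T}_g$, and let $v_0, v_1$ denote its velocity vectors at the two endpoints (both nonzero because $\Sigma_0 \neq \Sigma_1$). By Wolpert's theorem, each $f_i(t) := l_{\alpha_i}(\Sigma_t)$ is strictly convex on $[0,1]$, which gives
\[
dl_{\alpha_i}|_{\Sigma_0}(v_0) \;<\; \sys(\Sigma_1) - \sys(\Sigma_0) \;<\; dl_{\alpha_i}|_{\Sigma_1}(v_1)
\]
uniformly in $i$. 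Setting $\Delta := \sys(\Sigma_1) - \sys(\Sigma_0)$: if $\Delta \le 0$, then $dl_{\alpha_i}|_{\Sigma_0}(-v_0) > 0$ for every $i$, contradicting eutacticity at $\Sigma_0$; if $\Delta > 0$, then $dl_{\alpha_i}|_{\Sigma_1}(v_1) > 0$ for every $i$, contradicting eutacticity at $\Sigma_1$. Either branch yields a contradiction, finishing the proof.

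The main obstacle is more bookkeeping than substance: one must verify that after the mapping class reduction the two critical points genuinely share the same set of isotopy classes (which is exactly the content of equivalence), and that Wolpert's convexity along non-constant WP geodesics is \emph{strict}, so that the sign information in the two displayed inequalities survives to contradict eutacticity. Both ingredients are available in the literature, and the argument is a clean synthesis of Akrout's and Wolpert's theorems with no new analytic input required.
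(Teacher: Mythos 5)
The paper does not prove this corollary itself; it is quoted verbatim from Schmutz Schaller's 1999 paper (Corollary 21 there) and used as a black box, so there is no in-paper proof to compare against. Judged on its own, your argument is correct and is essentially the modern (Akrout-style) route to this fact: after pulling one marked surface back by the mapping class realizing the equivalence, both points lie in $\mathcal{T}_g$ with the same finite set $\{\alpha_1,\dots,\alpha_n\}$ of isotopy classes of shortest curves, each attaining the systole at both endpoints; Wolpert's theorems supply both a connecting WP geodesic inside $\mathcal{T}_g$ and strict convexity of each $t\mapsto l_{\alpha_i}(\Sigma_t)$, giving $dl_{\alpha_i}|_{\Sigma_0}(v_0) < \Delta < dl_{\alpha_i}|_{\Sigma_1}(v_1)$ for all $i$; and Akrout's eutacticity characterization (that $0$ lies in the relative interior of the convex hull of $\{dl_{\alpha_i}\}$ at a critical point) rules out the uniform sign on whichever side the case split picks. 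Two small points worth being explicit about in a write-up: (i) the corollary as stated tacitly assumes $\Sigma$ and $\Sigma'$ are distinct as points of $\mathcal{M}_g$, which your ``otherwise there is nothing to prove'' handles; and (ii) the contradiction with eutacticity only needs $0 = \sum_i c_i\, dl_{\alpha_i}$ with all $c_i>0$, so strict positivity of every $dl_{\alpha_i}(w)$ for a single tangent vector $w$ already suffices, as you use. No gaps.
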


 \section{Upper bound}
\label{sec_upper}

 On a $\Sigma\in \Crit(\sys)$, the union of shortest geodesics can be treated as a graph. Vertices of this graph are the intersection points of shortest geodesics. Edges of the graph consist of subarcs of shortest geodesics connecting two intersection points of shortest geodesics. This graph is denoted by $\Gamma$. 

 To obtain the upper bound of $|\Crit(\sys_g)|$ on $\mathcal{M}_g$, first we need to estimate the number of vertices, edges and degree at each vertex of the graph $\Gamma$ defined in the previous paragraph. 
 Then we count the number of graphs satisfying this restriction by the method of Kahn-Markovic in \cite{kahn2012counting}. 
 This number bounds  $|\Crit(\sys_g)|$ from above because of Corollary \ref{cor_schmutz_fill} and \ref{cor_schmutz_unique}. 

 It is known that two shortest geodesics intersect at most  once, see for example \cite{parlier2014simple}. 

 By this fact, we have the folloing estimate:

 \begin{lemma}
		 For any $\Sigma\in \Crit(\sys_g)$, the graph $\Gamma=(V,E)$ satisfies
		 \begin{enumerate}
				 \item $|V|\le 2545ge^{\sys(\Sigma)}$, 
				 \item $|E|\le  3|V|+6g-6 \le 7700 ge^{\sys(\Sigma)}$, 
				 \item The genus 
						 \footnote{For a connected graph $\Gamma = (V,E)$, the genus of $\Gamma$ is the number of circles of $\Gamma$, more precisely, is $\min_{E_0\in \mathcal{E}} |E_0|$. Here $\mathcal{E} = \left\{ E_0 \subset E |\Gamma\backslash E_0 \text{ is a tree. } \right\}$.}
						 of $\Gamma$ (denoted by $g_{\Gamma}$) is bounded by $\pi\frac{\sqrt{g(g-1)}}{\log(4g-2)}\le g_{Gr}\le 3|V|+6g-6\le 7700 ge^{\sys(\Sigma)}$. 
				 \item For all $ v\in V$, $\deg(v)\le 2e^{\sys(\Sigma)/4}$. 
		 \end{enumerate}
		 \label{lem_gph_control}
 \end{lemma}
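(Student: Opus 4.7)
The plan is to establish the four inequalities in the order (4), (1), (2), (3): the local degree estimate drives the global combinatorial estimates via the filling condition and Euler's formula.

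\textbf{Degree bound (4).} At any vertex $v$, every essential loop based at $v$ is freely homotopic to a closed geodesic of length at least $\sys(\Sigma) = L$, so the injectivity radius at $v$ is at least $L/2$. Consequently $B(v, L/4)$ embeds isometrically into $\mathbb{H}^2$ as a hyperbolic disk of radius $L/4$, and each of the $k_v$ shortest geodesics through $v$ cuts this disk along a diameter chord, placing $2k_v$ distinct points on the boundary circle of circumference $2\pi\sinh(L/4) \le \pi e^{L/4}$. A shortcut argument --- if two endpoints coming from distinct chords lay too close on this circle, a subarc of one geodesic could be rerouted through the connecting segment and a subarc of the second geodesic to produce a closed curve shorter than $\sys(\Sigma)$ in a nontrivial free homotopy class --- forces a uniform lower bound on the pairwise separation, so that a circle-packing count gives $2k_v \le 2e^{L/4}$.

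\textbf{Vertex bound (1).} Two distinct shortest geodesics meet in at most one point by the cited result of Parlier, so $|V| \le \binom{N}{2}$ where $N = |S(\Sigma)|$. To bound $N$, lift to the universal cover $\mathbb{H}^2 \to \Sigma$: each shortest geodesic corresponds to a conjugacy class of primitive hyperbolic elements of $\pi_1(\Sigma)$ of translation length exactly $L$. Lattice-counting, using that balls of radius $L/2$ around $\Gamma$-orbit points are mutually disjoint (from the injectivity estimate) together with the exponential volume growth in $\mathbb{H}^2$ and $\mathrm{Area}(\Sigma) = 4\pi(g - 1)$, yields a bound of the form $N \le C\sqrt{g}\,e^{L/2}$; squaring and optimizing constants delivers $|V| \le 2545\,g\,e^L$.

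\textbf{Edge and genus bounds (2), (3).} By Corollary \ref{cor_schmutz_fill}, $\Gamma$ fills $\Sigma$, so every component of $\Sigma \setminus \Gamma$ is a topological disk. Parlier's single-intersection property forbids bigon faces, and no face can be a monogon because the only loop-edge of $\Gamma$ would be a complete simple closed geodesic, which cannot bound a disk in a hyperbolic surface. Hence every face has at least three sides, and $3F \le 2|E|$ combined with Euler's identity $|V| - |E| + F = 2 - 2g$ yields $|E| \le 3|V| + 6g - 6$; substituting the vertex bound delivers the numerical $7700\,g\,e^L$. The upper bound on $g_\Gamma$ is immediate from $g_\Gamma \le |E|$, and the lower bound is obtained by pairing the Euler-derived $g_\Gamma = F + 2g - 1 \ge 2g$ with a systolic inequality of Buser--Sarnak type $\sys(\Sigma) \le 2\log(4g - 2)$ to cast it in the stated form.

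\textbf{Main obstacle.} The technical heart is the shortcut construction for the degree bound: the candidate loop built inside the embedded ball $B(v, L/4)$ must be shown to represent a nontrivial element of $\pi_1(\Sigma)$ --- not merely a contractible loop in the ball --- in order to trigger a genuine systolic contradiction, and this is what forces the rate $e^{L/4}$ rather than the naively-obtained $e^{L/2}$. Extracting the explicit constants ($2545$, $7700$, and the factor $2$ in $2e^{L/4}$) from the orbit-counting and circle-packing steps also requires careful hyperbolic trigonometry and optimization.
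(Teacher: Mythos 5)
Your items (2), (3), and (4) are all in the right spirit, but item (1) — the vertex bound — is where your route diverges from the paper and where it actually breaks.

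\textbf{The gap in (1).} You propose $|V| \le \binom{N}{2}$ with $N = |S(\Sigma)|$ and then assert $N \le C\sqrt{g}\,e^{L/2}$ from lattice counting. That bound on the kissing number is too strong: Schmutz's arithmetic examples have $N \gtrsim g^{4/3-\varepsilon}$ systoles with $\sys \sim \tfrac{4}{3}\log g$, so there $\sqrt{g}\,e^{L/2} \sim g^{7/6}$, which is smaller than $g^{4/3-\varepsilon}$ for large $g$. Even the correct order of magnitude (linear in $g$, as follows a posteriori from the lemma's own parts (2)–(3) since $N \le g_\Gamma \le 7700ge^{L}$) only gives $\binom{N}{2} \lesssim g^2 e^{L}$, quadratic in $g$ — not the linear-in-$g$ bound you must prove. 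The paper avoids this entirely by counting \emph{locally}: following Parlier, cover $\Sigma$ by $F(\Sigma) < 16(g-1)e^{L/2}$ disks of radius $r = \arcsinh\bigl(\tfrac{1}{2\sinh(L/4)}\bigr)$; at most $G(\Sigma) < 17.83\,e^{L/4}$ systoles cross any one disk; each intersection point lies in some disk where at most $\binom{G(\Sigma)}{2}$ intersections can occur; hence $|V| \le F(\Sigma)\cdot\binom{G(\Sigma)}{2} \le \tfrac{1}{2}F G^2 < 2545\,g\,e^{L}$. Charging intersections to small disks rather than squaring a global systole count is the essential idea you are missing.

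\textbf{The other parts.} For (4) the paper simply cites Parlier's Lemma 2.4 rather than reconstructing the shortcut argument; your sketch is the right idea but informal. For (2), your direct face-counting ($3F \le 2|E|$ because the at-most-one-intersection property forbids bigons and monogons) is essentially equivalent to the paper's device of completing $\Gamma$ to a triangulation and applying $2E = 3F$ there. For (3), your lower bound $g_\Gamma = F + 2g - 1 \ge 2g$ (using that $\Gamma$ is connected, which follows from the filling condition) is a genuinely different and in fact \emph{stronger} bound than the paper's $\pi\sqrt{g(g-1)}/\log(4g-2)$, which the paper obtains by citing Anderson et al.'s lower bound on $|S(\Sigma)|$ together with $g_\Gamma \ge |S(\Sigma)|$ (the systoles are edge-disjoint cycles in $\Gamma$); your route is cleaner, and the Buser--Sarnak inequality you invoke is unnecessary since $2g$ dominates the stated expression for all $g \ge 2$ directly.
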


 \begin{proof}
		 (1) In \cite{parlier2013kissing}, for a surface $\Sigma\in \mathcal{M}_g$, Parlier defined $F(\Sigma)$ to be the minimum number of radius-$r$-disks  needed for covering $\Sigma$ and $G(\Sigma)$ to be supremum number of shortest geodesics of $\Sigma$ that pass through a radius-$r$-disk, where $r:=\arcsinh\left( \frac{1}{2\sinh(\sys(\Sigma)/4)} \right)$. 
		 These two numbers are bounded from above by
		 \[
				 F(\Sigma) < 16(g-1)e^{\sys(\Sigma)/2}  \text{ and } G(\Sigma)< 17.83e^{\sys(\Sigma)/4}.
		 \]
 In each radius-$r$-disk, there are at most ${G(\Sigma)}\choose{2}$ intersection points since it is known that two shortest geodesics intersect at most once. Then on the surface $\Sigma$, number of intersection points of shortest geodesics is bounded from above by 
		 \[
				 \frac{F(\Sigma)G^2(\Sigma)}{2}. 
		 \]
		 Then (1) follows by direct calculation. 

		 (2) We can add some edges to $\Gamma$ to get a triangulation of $\Sigma$. Therefore we can bound the number of edges of $\Gamma$ from above by bounding the number of edges of the triagulations containing $\Gamma$ from above. 

		 If $\Gamma$ is a triagulation of $\Sigma$, then the number of edges and faces of this triangulation satisfies the following relation: 
		 \begin{equation}
				 2E=3F.
				 \label{for_edge_face}
		 \end{equation}
The Euler characteristic formula is 
\begin{equation}
		V-E+F =2-2g. 
		\label{for_euler}
\end{equation}

Then by (\ref{for_edge_face}) and (\ref{for_euler}), (2) follows. 

		 (3) The genus of a graph is $g_{\Gamma} = 1- \chi(\Gamma)$. Here $\chi(\Gamma)$ is the Euler characteristic of $\Gamma$. Then the upper bound follows from (1) and (2). 

		 By \cite[Theorem 3]{anderson2011small}, if $\Sigma\in \mathcal{M}_g$ has filling shortest geodesics then $|S(\Sigma)|\ge \pi\frac{\sqrt{g(g-1)}}{\log(4g-2)}$. Therefore $\pi\frac{\sqrt{g(g-1)}}{\log(4g-2)}$ also lower bounds $g_{\Gamma}$. 

		 (4) This follows directly from \cite[Lemma 2.4]{parlier2013kissing}, for details, see \cite{parlier2013kissing}. 
 \end{proof}

 We may add some vertices to $\Gamma$ such that (1) every edge of the resulting graph connects two different vertices, (2) number of vertices, edges, and degree of the resulting graph is still bounded by the bounds in Lemma \ref{lem_gph_control}. We still denote the resulting graph by $\Gamma$ . 

 By Corollary \ref{cor_schmutz_fill}, each component of $\Sigma\backslash \Gamma$ is a polygonal disk. 
 We add some edges to $\Gamma$ so that we get a triangulation of $\Sigma$ and the number of vertices, edges and degree at each vertex of the triangulation are bounded by the bounds in Lemma \ref{lem_gph_control}. (See Figure \ref{fig_polygon_1}. )

 \begin{figure}[htbp]
		 \centering
		 \includegraphics{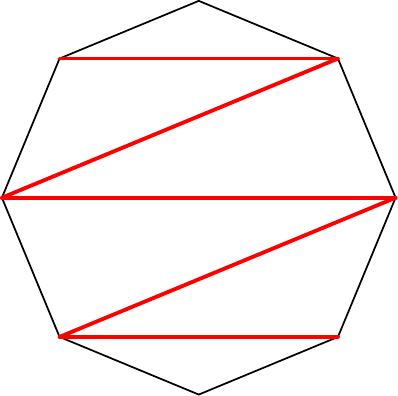}
		 \caption{A triangulation of the polygon}
		 \label{fig_polygon_1}
 \end{figure}

 We recall a definition in \cite{kahn2012counting}:
 \begin{definition}[\cite{kahn2012counting}, Definition 2.1]
		 For $k>0$, $g\ge2$, a triangulation $\tau$ of genus $g$ surface is in the set $T(k,g)$ if and only if 
		 \begin{itemize}
				 \item each vertex of $\tau$ has the degree at most $k$, 
				 \item the graph $\tau$ has at most $kg$ vertices and at most $kg$ edges.
		 \end{itemize}
 \end{definition}

 By the construction above, we have
 \begin{lemma}
		 There is a constant $0<C<7700$, for all $ \Sigma\in \Crit(\sys_g)$, $\Gamma$, the graph consisting of shortest geodesics of $\Sigma$ is a subgraph of a triangulation $\tau \in T(Ce^{\sys(\Sigma)},g)$. 
		 \label{lem_subgraph}
 \end{lemma}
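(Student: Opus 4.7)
The plan is to take the graph $\Gamma$ produced earlier in this section and complete it to a genuine triangulation of $\Sigma$ while controlling the three parameters that enter the definition of $T(k,g)$: the number of vertices, the number of edges, and the maximum vertex degree. Lemma \ref{lem_gph_control} already yields $|V| \le 2545\, g\, e^{\sys(\Sigma)}$, $|E| \le 7700\, g\, e^{\sys(\Sigma)}$, and maximum degree at most $2e^{\sys(\Sigma)/4}$, so the work is to verify that turning $\Gamma$ into a triangulation does not overshoot these bounds by more than a constant factor.

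First I would pre-process $\Gamma$, as in the paragraph preceding the statement, by subdividing any loop or multiple edge with an auxiliary midpoint vertex: this at most doubles the vertex count, multiplies the edge bound by a constant, and only introduces vertices of degree $2$. By Corollary \ref{cor_schmutz_fill}, every face of $\Sigma \setminus \Gamma$ is an open topological disk, so after this pre-processing $\Gamma$ gives a genuine cellular decomposition of $\Sigma$ whose $2$-cells are polygons. I would then triangulate each polygonal face, as illustrated by Figure \ref{fig_polygon_1}, and appeal to Euler's formula $V_\tau - E_\tau + F_\tau = 2 - 2g$ together with the incidence identity $2 E_\tau = 3 F_\tau$ to deduce the linear relation $E_\tau = 3 V_\tau + 6g - 6$; this reduces the size part of the verification to bounding $V_\tau$ alone.

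The main obstacle is to carry out the per-face triangulation without inflating the maximum degree past $Ce^{\sys(\Sigma)}$. A naive fan triangulation from a single central vertex would create a vertex whose degree equals the number of sides of its enclosing polygon, and in principle a single face could have on the order of $|E|$ sides, which is not controlled by $e^{\sys(\Sigma)}$ alone when $g$ is large. The remedy, which is essentially what Figure \ref{fig_polygon_1} encodes, is to insert $O(n)$ interior vertices inside each $n$-gon face and triangulate so that every vertex incident to that face (both the original boundary vertices of $\Gamma$ and the newly added interior ones) gains only a constant amount of degree. Summed over all faces this introduces $O(|E|)=O(g\, e^{\sys(\Sigma)})$ additional vertices and edges; combined with the pre-processing step, this keeps $V_\tau$ within a constant multiple of $g\, e^{\sys(\Sigma)}$ and the maximum degree within a constant multiple of $e^{\sys(\Sigma)}$. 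A direct bookkeeping of the constants coming from Lemma \ref{lem_gph_control}, the midpoint subdivision, and the interior subdivision of faces should then produce an absolute constant $C<7700$ with $\tau \in T(Ce^{\sys(\Sigma)},g)$.
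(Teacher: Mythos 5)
Your overall plan is the paper's plan: pre-process $\Gamma$ so edges have distinct endpoints, triangulate the polygonal faces, and read off the $T(k,g)$ parameters. You also correctly flag the real danger, which the paper passes over lightly: a naive fan or star triangulation can concentrate $n-3$ diagonals at one vertex of an $n$-gon, and since a single face may have $\Theta(|E|) = \Theta(g\,e^{\sys(\Sigma)})$ sides, that vertex's degree would exceed any bound of the form $Ce^{\sys(\Sigma)}$ once $g$ is large.

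The gap is in your remedy. Inserting $O(n)$ interior vertices in every $n$-gon does control degree, but it wrecks the vertex and edge counts, and the ``direct bookkeeping'' you defer to would expose this rather than save it. The total number of polygon sides is $\sum_F n_F = 2|E|$, and Lemma \ref{lem_gph_control}(2) already allows $|E|$ to be as large as about $7641\,g\,e^{\sys(\Sigma)}$; adding even one interior vertex per side pushes $|V_\tau|$ to roughly $|V| + 2|E| \approx 1.8\times 10^4\, g\,e^{\sys(\Sigma)}$, and then $|E_\tau| = 3|V_\tau| + 6g - 6 \approx 5\times 10^4\, g\,e^{\sys(\Sigma)}$ — both far above the $7700\,g\,e^{\sys(\Sigma)}$ that the lemma asserts. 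The paper's construction (see the proof of Lemma \ref{lem_gph_control}(2) and the paragraph before this lemma) adds \emph{no} interior vertices at all: triangulate each face with diagonals only, and use a balanced ``zigzag/strip'' triangulation so that each occurrence of a vertex on a face boundary receives at most $2$ new diagonals. Then $|V_\tau| = |V|$, $|E_\tau| = 3|V| + 6g - 6 \le 7700\,g\,e^{\sys(\Sigma)}$ exactly as in Lemma \ref{lem_gph_control}, and each vertex's degree rises from at most $2e^{\sys(\Sigma)/4}$ to at most $3\cdot 2e^{\sys(\Sigma)/4} = 6e^{\sys(\Sigma)/4}$, comfortably below $Ce^{\sys(\Sigma)}$ for any $C < 7700$. (Your factor-of-two allowance in the pre-processing step would cause the same overflow through $|E_\tau| = 3|V_\tau| + 6g - 6$; the pre-processing must also be argued to stay essentially within the bounds of Lemma \ref{lem_gph_control}, as the paper asserts.)
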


 By Corollary \ref{cor_schmutz_unique}, shortest geodesics on two different critical points of $\sys_g$ must be non-equivalent. 
 Therefore number of critical points of $\sys_g$ whose systole length is smaller than $L>0$ is bounded from above by number of filling subgraphs of triangulations in $T(Ce^L,g)$ that satisfies the restriction of Lemma \ref{lem_gph_control}. 

 \begin{proposition}

		 For $L>0$ we let  
		 \[
				 \Crit(\sys_g)^{\le L} = \left\{ \Sigma\in \Crit(\sys_g)\subset \mathcal{M}_g|
		 \sys(\Sigma)\le L\right\}, 
		 \]
		 and let $|V_0| = 2545ge^L$, 
		 $|g_{\Gamma_0}| = 7700ge^L$, and $\deg_0 = 2 e^{L/4}$. 

Then 
\begin{equation}
		|\Crit(\sys_g)^{\le L}| \le \Cat(|V_0|-1) \frac{(|V_0|)^{2g_{\Gamma_0}}}{\left \lfloor \pi\frac{\sqrt{g(g-1)}}{\log(4g-2)}\right \rfloor  !} (\lceil\deg_0\rceil!)^{|V_0|}. 
		\label{for_crit}
\end{equation}
Here $\Cat(|V|-1)$ is the $(|V|-1)$-th Catalan number. 
\label{prop_upper_bound}
 \end{proposition}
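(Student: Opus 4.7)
The plan is to translate the bound on $|\Crit(\sys_g)^{\le L}|$ into a combinatorial count of ribbon graphs. By Corollary~\ref{cor_schmutz_unique}, two distinct critical points must yield inequivalent pairs $(\Sigma, S(\Sigma))$, so $|\Crit(\sys_g)^{\le L}|$ is bounded above by the number of equivalence classes of filling graphs $\Gamma$ on a genus-$g$ closed surface that satisfy the constraints of Lemma~\ref{lem_gph_control}: at most $|V_0|$ vertices, genus $g_\Gamma$ sandwiched between $\lfloor \pi\sqrt{g(g-1)}/\log(4g-2) \rfloor$ and $g_{\Gamma_0}$, and each vertex of degree at most $\lceil \deg_0 \rceil$. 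Such an equivalence class is captured by its ribbon-graph structure (the abstract graph together with a cyclic ordering of half-edges at each vertex), so it suffices to enumerate these ribbon graphs.

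Following Kahn and Markovic \cite{kahn2012counting}, I would encode each such ribbon graph by the triple (i) a rooted spanning plane tree $T\subset \Gamma$ on $|V|$ vertices, (ii) an ordered list of the $g_\Gamma=|E|-|V|+1$ non-tree edges, each recorded as an ordered pair of endpoint vertices, and (iii) the placement of the non-tree half-edges within the cyclic orderings at each vertex. Since the number of rooted plane trees on $|V|$ vertices equals $\Cat(|V|-1)$ and Catalan numbers are monotone, (i) contributes at most $\Cat(|V_0|-1)$. For (ii), each non-tree edge has at most $|V|^2 \le |V_0|^2$ choices of ordered endpoints, so the ordered list provides at most $|V_0|^{2g_{\Gamma_0}}$ options; since reordering the $g_\Gamma$ non-tree edges in the list yields the same ribbon graph we divide by $g_\Gamma!$, and we use the lower bound $g_\Gamma!\ge \lfloor \pi\sqrt{g(g-1)}/\log(4g-2)\rfloor!$ from Lemma~\ref{lem_gph_control}(3) to weaken the denominator while preserving the upper-bound direction. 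For (iii), at each vertex the cyclic rotation of at most $\lceil \deg_0\rceil$ half-edges contributes at most $\lceil \deg_0\rceil!$ choices, for a total factor $(\lceil \deg_0\rceil!)^{|V_0|}$. Multiplying these three contributions yields exactly the right-hand side of (\ref{for_crit}).

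The main obstacle is checking that the map from encodings (i)--(iii) to ribbon-graph equivalence classes is surjective: every equivalence class of filling graph within the stated size constraints must be realized by at least one such encoding, which follows because every connected graph admits a spanning tree and the ribbon-graph data reconstructs the embedded graph up to orientation-preserving homeomorphism. A minor technical point is that if several non-tree edges happen to share the same unordered endpoint pair (multi-edges) then fewer than $g_\Gamma!$ orderings yield distinct encodings; however dividing by the strictly smaller number $\lfloor \pi\sqrt{g(g-1)}/\log(4g-2)\rfloor!\le g_\Gamma!$ only enlarges the bound, so the inequality (\ref{for_crit}) remains valid.
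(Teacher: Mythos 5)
Your proof is correct and follows essentially the same route as the paper: reduce via Corollary~\ref{cor_schmutz_unique} to counting ribbon graphs under the constraints of Lemma~\ref{lem_gph_control}, then bound the count by the Kahn--Markovic three-factor decomposition (Catalan number for a rooted plane spanning tree, $|V_0|^{2g_{\Gamma_0}}/g_\Gamma!$ for the unordered non-tree edges, and $(\lceil\deg_0\rceil!)^{|V_0|}$ for the cyclic orderings). The one cosmetic difference is that the paper phrases the construction as first picking a triangulation $\tau\in T(Ce^L,g)$ containing $\Gamma$ and taking a spanning tree of $\tau$, whereas you take a spanning tree of $\Gamma$ directly; your version is cleaner and avoids an awkwardness in the paper's phrasing (a spanning tree of $\tau$ need not sit inside $\Gamma$), but the resulting bound and the structure of the argument are identical, and your explicit remarks on surjectivity of the encoding and on the multi-edge overcounting are exactly the right justifications for the $g_\Gamma!$ division that the paper delegates to \cite{kahn2012counting}.
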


 \begin{proof}
		The proof here is similar to the proof of \cite[Lemma 2.2]{kahn2012counting}. 

		Fix some genus $g$ surface, we assume that 
		\[
				\mathcal{G} = \left\{ \Gamma|\exists \tau \in T(Ce^{L}, g), s.t. \Gamma\subset \tau, \Gamma \text{ is filling, }\Gamma \text{ satisfies the restriction in Lemma \ref{lem_gph_control}} \right\}.
		\]

		Any $\Gamma \in \mathcal{G}$ can be constructed in this way. 
		We pick a $\tau \in T(Ce^L,g)$, and then choose a spanning tree $T$ of $\tau$. Since $\tau \in T(Ce^L,g)$, $T$ has at most $2545ge^L$ vertices. After that, we add $g_{\Gamma}$ edges $e_1,e_2,\dots,e_{g_{\Gamma}}$ to the tree $T$ in an arbitary way. Thus $T\cup \left\{ e_1,\dots,e_{g_{\Gamma}} \right\}$ is a graph with genus at most $g_{\Gamma}$. Next we give each vertex of $T\cup \left\{ e_1,\dots,e_{g_{\Gamma}} \right\}$ a cyclic ordering, thus $T\cup \left\{ e_1,\dots,e_{g_{\Gamma}} \right\}$ is a {\em ribbon graph}\footnote{A ribbon graph is a graph with every vertex of the graph assigned a cyclic ordering. }. We thicken the edges of the ribbon graph and get a surface with boundary. If the genus of this surface is not $g$, we discard it. Finally, we fill each boundary component of the surface by a disk, thus get a closed surface $S_g$. 
		Every graph in $\mathcal{G}$ can be constructed this way. 

		By this construction, $|\mathcal{G}|$ can be bounded from above by $|\mathcal{G}|\le abc$. Here 
		\begin{eqnarray*}
				a&=&\left\{ \text{number of unlabelled trees }T \text{ with }n\le 2545ge^L \text{ vertices} \right\}\\
				b&=& \left\{ \text{number of ways of adding } g_{\Gamma} \text{ many unlabelled edges } e_1,\dots,e_{g_{\Gamma}} \text{ to } T \right\} \\
				c&=& \left\{ \text{number of cyclic orderings of edges of } T\cup \left\{ e_1,\dots,e_{g_{\Gamma}} \right\}\right\}. 
		\end{eqnarray*}
		Because Catalan number $\Cat(n)$ enumerates number of ordered planar trees with $n+1$ vertices (see for example \cite{stanley2015catalan}) and by proof of \cite[Lemma 2.2]{kahn2012counting} and Lemma \ref{lem_gph_control}, 
		\begin{eqnarray*}
				a &\le& \Cat(|V_0|-1) \\
				b &\le& \frac{(|V_0|)^{2g_{\Gamma_0}}}{\left \lfloor \pi\frac{\sqrt{g(g-1)}}{\log(4g-2)}\right \rfloor  !}
				 \\
				c &\le& (\lceil\deg_0\rceil!)^{|V_0|}. 
		\end{eqnarray*}
		Then this proposition follows. 
 \end{proof}

 By area comparison, for any $\Sigma\in \mathcal{M}_g$, $\sys(\Sigma)\le \log(2g^2)$. 
 Therefore if we let $L = \log(2g^2)$, then $\Crit(\sys_g) = \Crit(\sys_g)^{\le L}$. We let $g\to \infty$, then we have the followiing theorem: 
 \begin{theorem}
		 If $g$ is large enough then 

		 \[
				 |\Crit(\sys_g)| \le (6g)^{Cg^{\frac{7}{2}}}. 
		 \]
		 Here $C= 6054$. 
		 \label{thm_upper}
 \end{theorem}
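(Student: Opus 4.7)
The plan is to combine the universal a priori bound on the systole of any genus-$g$ hyperbolic surface with the counting estimate of Proposition \ref{prop_upper_bound}. Specifically, I would first invoke a Gauss--Bonnet area comparison to show that $\sys(\Sigma) \le \log(2g^2)$ for all $\Sigma \in \mathcal{M}_g$ once $g$ is sufficiently large: a systole of length $\ell$ produces an embedded disk whose hyperbolic area is forced by Gauss--Bonnet to be at most $4\pi(g-1)$, giving $\ell = 2\log g + O(1)$, which is below $\log(2g^2) = 2\log g + \log 2$ for $g$ large. This identifies $\Crit(\sys_g)$ with $\Crit(\sys_g)^{\le L}$ for $L = \log(2g^2)$, so the theorem reduces to evaluating the bound (\ref{for_crit}) asymptotically at this $L$.

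Substituting $L = \log(2g^2)$ gives $e^L = 2g^2$ and $e^{L/4} = 2^{1/4}g^{1/2}$, so the three quantities entering Proposition \ref{prop_upper_bound} become $|V_0| = 5090\, g^3$, $g_{\Gamma_0} = 15400\, g^3$, and $\deg_0 = 2^{5/4}\, g^{1/2}$. The right-hand side of (\ref{for_crit}) is a product of three factors: the Catalan number $\Cat(|V_0|-1) \le 4^{|V_0|}$, the middle factor dominated by $(|V_0|)^{2g_{\Gamma_0}}$, and the cyclic-ordering factor $(\lceil\deg_0\rceil!)^{|V_0|}$. I would take logarithms and track which term dominates.

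Using the elementary bound $n! \le n^n$, the cyclic-ordering factor contributes at most $|V_0|\cdot \lceil\deg_0\rceil\log\lceil\deg_0\rceil$ to the logarithm, whose leading term is $5090 \cdot 2^{5/4}\cdot \tfrac12\, g^{7/2}\log g \approx 6053\, g^{7/2}\log g$. In contrast, the Catalan factor contributes only $O(g^3)$ and the middle factor only $O(g^3 \log g)$, both of which are $o(g^{7/2}\log g)$. Comparing with the target bound $\log\bigl((6g)^{6054 g^{7/2}}\bigr) = 6054\, g^{7/2}\log g + 6054\, g^{7/2}\log 6$, the small surplus $6054 - 6053 \approx 1$ in the coefficient of $g^{7/2}\log g$, together with the $g^{7/2}$-buffer $6054\log 6$, absorbs every subdominant contribution provided $g$ exceeds an explicit threshold.

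The main (and essentially the only) obstacle is a careful chase of constants: one must confirm that the leading coefficient $5090\cdot 2^{5/4}/2$ is strictly below $6054$, and that ``$g$ large enough'' is taken past the crossover point where this surplus dominates both the $O(g^3\log g)$ piece from the middle factor and the $O(g^{7/2})$ correction coming from Stirling and from $\log 2^{5/4}$. Conceptually the proof is just Proposition \ref{prop_upper_bound} evaluated at the universal upper bound on $L$; no further geometric input is required.
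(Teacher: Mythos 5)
Your proposal is correct and follows essentially the same route as the paper: invoke the Bers/Gauss--Bonnet area bound $\sys(\Sigma) \le \log(2g^2)$ to identify $\Crit(\sys_g)$ with $\Crit(\sys_g)^{\le \log(2g^2)}$, substitute into Proposition~\ref{prop_upper_bound}, and observe that the cyclic-ordering factor $(\lceil\deg_0\rceil!)^{|V_0|}$ is the dominant term, contributing a leading coefficient $5090\cdot 2^{1/4}\approx 6053$ in front of $g^{7/2}\log g$, which is absorbed by the target constant $6054$ together with the $g^{7/2}\log 6$ buffer. Your use of $\lceil\deg_0\rceil$ inside the $n!\le n^n$ estimate is in fact marginally more careful than the paper's $\deg_0^{\deg_0|V_0|}$, but this does not change the asymptotics or the conclusion.
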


 \begin{proof}
		 This theorem follows from direct calculation. 
		 First we let $L= \log(2g^2)$ and then we obtain formulae of $|V_0|$, $g_{\Gamma_0}$ and $\deg_0$ in Proposition \ref{prop_upper_bound} in $g$: 
		 \begin{eqnarray*}
				 |V_0|&=& 2545ge^L = 5090g^3\\
				 g_{\Gamma_0} &=& 7700 ge^L = 15400g^3\\
				 \deg_0 &=& 2e^{\frac{L}{4}} = 2^{\frac{5}{4}} g^{\frac{1}{2}}.
		 \end{eqnarray*}

		 When $n$ is sufficiently large, $\Cat(n) \sim \frac{4^n}{\sqrt{\pi}n^{\frac{3}{2}}}$, see for example \cite{stanley2015catalan}. Thus when $g$ is large enough, 
		 \begin{equation}
				 \Cat(|V_0|-1) \le \frac{4^{|V_0|-1}}{\sqrt{\pi} (|V_0|-1)^{\frac{3}{2}}} 
				 \le 4^{|V_0|-1} 
				 \le 4^{5090g^3}. 
				 \label{for_catalan}
		 \end{equation}
		 Besides 
		 \begin{equation}
				 \frac{(|V_0|)^{2g_{\Gamma_0}}}{\left \lfloor \pi\frac{\sqrt{g(g-1)}}{\log(4g-2)}\right \rfloor  !} \le
				 (|V_0|)^{2g_{\Gamma_0}} = \left( 5090g^3 \right)^{15400g^3}, 
				 \label{for_b_item}
		 \end{equation}
		 and
		 \begin{eqnarray}
				 (\lceil\deg_0\rceil!)^{|V_0|} &\le & \deg_0^{\deg_0 |V_0|} 
				 \label{for_c_item}\\
				 &=& \left( 2^{\frac{5}{4}}g^{\frac{1}{2}} \right)^{2^{\frac{5}{4}}g^{\frac{1}{2}} \cdot 5090g^3} \nonumber\\
				 &\le& (5.6g)^{6053.1g^{\frac{7}{2}}}.\nonumber `
		 \end{eqnarray}
		 Then by (\ref{for_crit}), (\ref{for_catalan}), (\ref{for_b_item}) and (\ref{for_c_item}), when $g$ is large enough, 
		 \[
				 |\Crit(\sys_g)| \le (6g)^{6054g^{\frac{7}{2}}}. 
		 \]
 \end{proof}

 If $L>0$ is a constant, we have 

 \begin{theorem}
		 If $L>0$ is a constant, $g$ is sufficiently large, then 
		 \[
				 |\Crit(\sys_g)^{\le L}| 
				 \le \left ( 2^{C_1 ge^L}\cdot 2^{C_2 ge^{\frac{5L}{4}}}\right ) \cdot \left( (e^{L})^{C_3ge^L}\cdot (e^{L})^{C_4ge^{\frac{5L}{4}}} \right) \cdot g^{C_3ge^L}. 
		 \]
		 Here $C_1 = 200000$, $C_2 = 5040$, $C_3 = 15400$, and $C_4 = 1300$. 
		 \label{thm_const}
 \end{theorem}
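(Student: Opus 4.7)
The plan is to deduce Theorem \ref{thm_const} directly from Proposition \ref{prop_upper_bound}, in the same spirit as the derivation of Theorem \ref{thm_upper}, but with $L$ kept as an explicit constant rather than specialized to $\log(2g^2)$. First I would substitute $|V_0|=2545ge^L$, $g_{\Gamma_0}=7700ge^L$, and $\deg_0=2e^{L/4}$ into the right-hand side of the bound from Proposition \ref{prop_upper_bound}, and estimate the three factors $\Cat(|V_0|-1)$, $|V_0|^{2g_{\Gamma_0}}/\lfloor\pi\sqrt{g(g-1)}/\log(4g-2)\rfloor!$, and $(\lceil\deg_0\rceil!)^{|V_0|}$ separately, rather than grouping them together as was done for the specialization $L=\log(2g^2)$.

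For the Catalan factor I would apply $\Cat(n)\le 4^{n}$, valid for large $n$ via the standard asymptotic, so the contribution is $2^{5090\,ge^{L}}$. For the middle factor I would drop the factorial denominator (it is $\ge 1$) and split
\[
(2545\,ge^L)^{15400\,ge^L}=(2545)^{15400\,ge^L}\cdot g^{15400\,ge^L}\cdot (e^L)^{15400\,ge^L};
\]
the last two pieces account for the coefficients $C_3=15400$ attached to both the $g^{C_3 ge^L}$ term and the $(e^L)^{C_3 ge^L}$ term, while the first piece is absorbed into the $2^{C_1 ge^L}$ slot since $(2545)^{15400\,ge^L}=2^{(15400\log_2 2545)\,ge^L}$ with $15400\log_2 2545<1.8\times 10^5<C_1$. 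For the degree-factorial factor I would use $n!\le n^n$ to write
\[
(\lceil\deg_0\rceil!)^{|V_0|}\le \deg_0^{\deg_0 |V_0|}=(2e^{L/4})^{5090\,ge^{5L/4}},
\]
and then separate the base $2$ from the base $e^{L/4}$ to read off the $2^{C_2 ge^{5L/4}}$ and $(e^L)^{C_4 ge^{5L/4}}$ contributions, with numerical constants close to the stated $C_2=5040$ and $C_4=1300$ once the extra ceiling is absorbed.

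The hard part of the argument is purely bookkeeping: one has to check that the combination of the Catalan contribution $2^{5090\,ge^L}$ and the absorbed $2^{(15400\log_2 2545)ge^L}$ fits inside $2^{C_1 ge^L}=2^{200000\,ge^L}$ with comfortable slack, and that ceiling/floor effects (in $\lceil\deg_0\rceil$) together with the subleading $n^{-3/2}$ correction in Stirling's formula for $\Cat$ can be absorbed into the headroom left by the declared $C_2,C_3,C_4$ for all sufficiently large $g$. No conceptual obstacle arises beyond this arithmetic verification, since Proposition \ref{prop_upper_bound} has already carried out all of the structural counting.
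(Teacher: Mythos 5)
Your proposal follows exactly the route taken in the paper: substitute $|V_0|=2545ge^{L}$, $g_{\Gamma_0}=7700ge^{L}$, $\deg_0=2e^{L/4}$ into Proposition~\ref{prop_upper_bound}, bound the Catalan factor by $4^{|V_0|-1}$, drop the factorial in the denominator of the middle factor, bound $(\lceil\deg_0\rceil!)^{|V_0|}$ by $\deg_0^{\deg_0|V_0|}$, and then separate out the bases $2$, $e^{L}$, and $g$ to read off the constants. One small remark: your exponent $5090\,ge^{5L/4}$ for the degree factor is the arithmetically correct value of $\deg_0|V_0| = 2\cdot 2545\,ge^{5L/4}$, whereas the paper writes $5040$ at the corresponding step (which propagates to the declared $C_2=5040$); since the ceiling in $\lceil\deg_0\rceil$ can only \emph{increase} the bound, this gap cannot be ``absorbed'' downward as you suggest in your last step, and the stated $C_2$ appears to be a typo in the paper rather than a flaw in your derivation.
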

 \begin{proof}
		 If $L$ is a constant, $g$ is sufficiently large, then similarly to Theoorem \ref{thm_upper}, we have
		 \begin{eqnarray}
				 \label{for_catalan_l}\Cat(|V_0|-1)&\le& 4^{|V_0|-1} \\
				 &=& 4^{2545ge^L-1} \nonumber\\
				 \label{for_b_item_l}\frac{(|V_0|)^{2g_{\Gamma_0}}}{\left \lfloor \pi\frac{\sqrt{g(g-1)}}{\log(4g-2)}\right \rfloor  !} &\le& (|V_0|)^{2g_{\Gamma_0}} \le \left( 2545ge^L \right)^{7700ge^L\cdot 2} \\
				 &=&\left( 2545ge^L \right)^{15400ge^L}\nonumber  \\
				 \label{for_c_item_l}(\lceil\deg_0\rceil!)^{|V_0|} &\le & \deg_0^{\deg_0 |V_0|} \le \left( 2e^{\frac{L}{4}} \right)^{2e^{\frac{L}{4}}\cdot 2545ge^L} \\
				 &=& \left( 2e^{\frac{L}{4}} \right)^{5040ge^{\frac{5L}{4}}}. \nonumber 
		 \end{eqnarray}

		 Then by (\ref{for_crit}), (\ref{for_catalan_l}), (\ref{for_b_item_l}) and (\ref{for_c_item_l}), when $g$ is large enough, 
		 \[
				 |\Crit(\sys_g)^{\le L}| \le \left( 2546ge^L \right)^{15401ge^{L}}. 
		 \]
		 \begin{eqnarray*}
				 |\Crit(\sys_g)^{\le L}| &\le& 4^{2545ge^L-1}\cdot \left( 2545ge^L \right)^{15400ge^L} \cdot \left( 2e^{\frac{L}{4}} \right)^{5040ge^{\frac{5L}{4}}} \\
				 &\le&\left( 2^{5090ge^L}\cdot (2^{12})^{15400ge^L} \cdot 2^{5040ge^{\frac{5L}{4}}}\right)\cdot \\
				 && \left( (e^{L})^{15400ge^L}\cdot (e^{\frac{L}{4}})^{5040ge^{\frac{5L}{4}}} \right) \cdot g^{15400ge^L} \\
				 &\le& \left ( 2^{189890ge^L}\cdot 2^{5040 ge^{\frac{5L}{4}}}\right ) \cdot \left( (e^{L})^{15400ge^L}\cdot (e^{L})^{1273ge^{\frac{5L}{4}}} \right) \cdot g^{15400ge^L}. 
		 \end{eqnarray*}
 \end{proof}

 Fortier-Bouque and Rafi \cite{fortier2020local} obtained a series of local maxima of the systole function. Letting $n=3$ in \cite[Theorem B]{fortier2020local}, we have 
 \begin{theorem}[\cite{fortier2020local}, Theorem B]
		 There is $\beta>0$ and a sequence $g_k$, $g_k\to \infty$ as $k\to \infty$ such that 
		 \[
				 \left | \Crit(\sys_{g_k})^{\le 10} \right |\ge \left( \beta g_k \right)^{\frac{1}{3}g_k}.
		 \]
		 \label{thm_rafi}
 \end{theorem}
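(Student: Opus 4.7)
The plan is to construct, for each $k$ in a suitable sequence, a family of at least $(\beta g_{k})^{g_{k}/3}$ pairwise non-equivalent (in the sense of the pair $(\Sigma, S(\Sigma))$) hyperbolic surfaces of genus $g_{k}$, each of which is a local maximum of $\sys_{g_{k}}$ with systole at most $10$. Since local maxima are critical points, and since by Corollary \ref{cor_schmutz_unique} non-equivalent pairs yield distinct points in $\Crit(\sys_{g_{k}})$, this lower bound will follow.

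The construction is expected to proceed by taking regular covers of a fixed ``seed'' surface $\Sigma_{0}$ (for instance a highly symmetric Platonic-type surface, or a surface obtained from a regular right-angled polygon tiling). The seed $\Sigma_{0}$ should have filling shortest geodesics of length at most $10$ and an explicitly eutactic configuration, so that it is a local maximum of its own systole. One then considers finite regular covers $\tilde{\Sigma} \to \Sigma_{0}$ corresponding to surjections from $\pi_{1}(\Sigma_{0})$ onto finite groups $G$ of order on the order of $g_{k}$; cubic (trivalent) combinatorial data on $G$ is where the parameter $n = 3$ should enter and produce the $\tfrac{1}{3}$ in the exponent.

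The main steps are: first, verify that the preimage $\tilde{S}$ of $S(\Sigma_{0})$ under the covering is still the full set of shortest geodesics of $\tilde{\Sigma}$, i.e.\ no new short geodesic appears on the cover; second, verify that the eutactic condition is preserved under lifting, so that each cover is a local maximum of $\sys_{g_{k}}$; third, count the number of inequivalent covers (equivalently, equivalence classes of surjections $\pi_{1}(\Sigma_{0}) \twoheadrightarrow G$ for $G$ ranging over an appropriate class of groups of order comparable to $g_{k}$), and show that this count is at least $(\beta g_{k})^{g_{k}/3}$ via a subgroup-growth or random-cover estimate.

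The main obstacle is the first step: ensuring that no short ``accidental'' geodesic appears on the cover. Accidental short geodesics in covers can lower the systole below that of $\Sigma_{0}$ and destroy the local-maximum property. Controlling this likely requires either an arithmetic construction (where congruence conditions rule out short geodesics) or a probabilistic argument over random covers showing that a definite fraction of covers avoids new short geodesics, combined with a careful lower bound on the number of such covers. Once this is secured, the counting step reduces to well-known estimates on the number of index-$n$ subgroups of a surface group, which grow like $n^{n}$ up to constants, yielding the claimed bound with the exponent $\tfrac{1}{3} g_{k}$.
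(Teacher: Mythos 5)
This theorem is not proved in the paper; it is quoted from Fortier--Bourque and Rafi \cite{fortier2020local} (their Theorem~B, specialized to $n=3$) and used to derive Corollary~\ref{cor_const}, so there is no in-paper argument to compare against --- your sketch is really a reconstruction of the proof in \cite{fortier2020local}. As a reconstruction, the covering-space approach does not match what that paper does: their argument is a \emph{gluing} construction, in which one builds an explicit hyperbolic piece with boundary whose short closed geodesics all have the same length and fill, proves a gluing lemma guaranteeing that assembling copies of such pieces along their boundaries yields a local maximum of the systole on the resulting closed surface, and then counts the combinatorially distinct assemblies. The parameter $n$ indexes the building block and the valence of the gluing pattern, not a deck group, and the exponent $g_k/3$ comes from the bookkeeping between genus and combinatorial choices at $n=3$.

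More importantly, your sketch has a gap deeper than the accidental-short-geodesic issue you flagged. Even when a regular cover $\tilde\Sigma \to \Sigma_0$ introduces no new short geodesics, the local-maximum property does \emph{not} automatically persist from $\Sigma_0$ to $\tilde\Sigma$: the Teichm\"uller space of $\tilde\Sigma$ has dimension proportional to its much larger genus, and deformation directions transverse to the deck-group-equivariant locus are not controlled by the systoles of $\Sigma_0$ in any obvious way. To conclude that $\tilde\Sigma$ is even a critical point of $\sys$, let alone a local maximum, one would have to re-verify eutacticity and the relevant nondegeneracy on the full tangent space of $\tilde\Sigma$ --- for instance by showing that the differentials $\{dl_{\tilde\alpha}\}_{\tilde\alpha\in S(\tilde\Sigma)}$ positively span each isotypic component of the deck-group action --- and nothing in your plan supplies this. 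That is exactly the difficulty the gluing lemma of \cite{fortier2020local} is engineered to sidestep: building the closed surface directly from controlled pieces lets the filling and eutactic conditions be checked on the assembled surface itself rather than imported from a lower-dimensional quotient.
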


 This lower bound is not far from the upperbound we obtain.
 Letting $L=10$ in Theorem \ref{thm_const}, we immediately get the following corollary: 
 \begin{cor}
		 There are constants $\beta,\beta'>0$ and a sequence $g_k$, $g_k\to \infty$ as $k\to \infty$ such that
		 \[
				 \left( \beta g_k \right)^{\frac{1}{3}g_k}\le\left | \Crit(\sys_{g_k})^{\le 10} \right |\le \left( \beta' g_k \right)^{4\times 10^8 g_k}.
		 \]
		 \label{cor_const}
 \end{cor}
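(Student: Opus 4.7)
The corollary has two halves. The lower bound $(\beta g_k)^{g_k/3}\le |\Crit(\sys_{g_k})^{\le 10}|$ is immediate from Theorem \ref{thm_rafi}: one takes the sequence $\{g_k\}$ and the constant $\beta>0$ provided there and applies them directly, since $L=10$ is precisely the value appearing in that theorem. So the substantive content of the corollary is just the upper bound.

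For the upper bound the plan is to set $L=10$ in the formula of Theorem \ref{thm_const} and repackage the right hand side into the shape $(\beta' g_k)^{4\times 10^8 g_k}$. Once $L$ is fixed, $e^L$ and $e^{5L/4}$ become positive constants, and the bound becomes a product of two types of factors: constants raised to a linear power of $g$, and $g$ itself raised to a linear power of $g$. Only the latter contributes to the $g_k$-exponent of a bound of the form $(\beta' g_k)^{Kg_k}$, while the former can all be absorbed into the base $\beta'$.

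The one numerical check is that the coefficient of $g$ in the $g$-in-the-base exponent does not exceed $4\times 10^8$. The relevant term is $g^{C_3 g e^{10}}$ with $C_3 e^{10}=15400\cdot e^{10}$, and using $e^{10}<2.3\times 10^4$ one has $C_3 e^{10}<3.6\times 10^8$, which leaves comfortable slack up to $4\times 10^8$. The remaining product $2^{C_1 g e^{10}}\cdot 2^{C_2 g e^{12.5}}\cdot (e^{10})^{C_3 g e^{10}}\cdot (e^{10})^{C_4 g e^{12.5}}$ is of the form $D^g$ for an explicit constant $D$, and any such factor can be rewritten as $(D^{1/(4\times 10^8)})^{4\times 10^8 g}$ and absorbed into $\beta'$; the slack between $3.6\times 10^8$ and $4\times 10^8$ in the $g$-exponent is absorbed in the same way. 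There is no substantive obstacle; the whole argument reduces to a single substitution followed by bookkeeping with constants.
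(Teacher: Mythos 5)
Your proposal is correct and matches the paper's approach: the paper derives the corollary by quoting Theorem \ref{thm_rafi} directly for the lower bound and by setting $L=10$ in Theorem \ref{thm_const} for the upper bound, leaving the absorption of the constant factors into the base $\beta'$ implicit. Your filling-in of the numerical check ($15400\cdot e^{10}<4\times 10^8$) and the repackaging of the constant-base terms $D^g$ into $(D^{1/(4\times10^8)}g)^{4\times10^8 g}$ is exactly the bookkeeping the paper omits.
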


 \section{Further questions}
 \label{sec_questions}
 We have some further questions to consider: 

		 With the restriction $\sys\le C$, the upper bound is quite close to the lower bound. Then
 \begin{question}
		 Could we obtain a sharper estimate to critical points with large systoles?
 \end{question}
 Moreover, 
 \begin{question}
		 For all $C>0$, does
		 \[
				 \frac{\left | \Crit^{<C}(\sys_g)\right |}{\left | \Crit(\sys_g)\right |}
		 \] tend to $0$ as $g\to \infty$?
 \end{question}
 The lower bound in \cite[Theorem B]{fortier2020local} is provided merely by locally maximal points. Therefore, 
 \begin{question}
		 Could we estimate the number of critical points of the systole function with a specific index?
 \end{question}
 On the distribution of the systole function: 
 \begin{question}
		 Let $l_g:= \inf_{\Sigma\in \Crit(\sys_g)} \sys_g(\Sigma)$, by \cite[Corollary 20]{schaller1999systoles} and Mumford compactness criterion, $l_g$ has a positive universal lower bound. Can we estimate $l_g$? 
 \end{question}

\bibliographystyle{alpha}
\bibliography{systole_euc}   

\begin{thebibliography}{APP11}

\bibitem[Akr03]{akrout2003singularites}
Hugo Akrout.
\newblock Singularit{\'e}s topologiques des systoles g{\'e}n{\'e}ralis{\'e}es.
\newblock {\em Topology}, 42(2):291--308, 2003.

\bibitem[APP11]{anderson2011small}
James~W Anderson, Hugo Parlier, and Alexandra Pettet.
\newblock Small filling sets of curves on a surface.
\newblock {\em Topology and its Applications}, 158(1):84--92, 2011.

\bibitem[FB19]{fortier2019hyperbolic}
Maxime Fortier~Bourque.
\newblock Hyperbolic surfaces with sublinearly many systoles that fill.
\newblock {\em Commentarii Mathematici Helvetici}, 2019.

\bibitem[FBR20]{fortier2020local}
Maxime Fortier~Bourque and Kasra Rafi.
\newblock Local maxima of the systole function.
\newblock {\em Journal of the European Mathematical Society}, 2020.

\bibitem[HZ86]{harer1986euler}
John Harer and Don Zagier.
\newblock The euler characteristic of the moduli space of curves.
\newblock {\em Inventiones mathematicae}, 85(3):457--485, 1986.

\bibitem[Ji14]{ji2014well}
Lizhen Ji.
\newblock Well-rounded equivariant deformation retracts of teichm{\"u}ller
  spaces.
\newblock {\em L’Enseignement Math{\'e}matique}, 60(1):109--129, 2014.

\bibitem[KM12]{kahn2012counting}
Jeremy Kahn and Vladimir Markovi{\'c}.
\newblock Counting essential surfaces in a closed hyperbolic three-manifold.
\newblock {\em Geometry and Topology}, 16(1):601--624, 2012.

\bibitem[Par13]{parlier2013kissing}
Hugo Parlier.
\newblock Kissing numbers for surfaces.
\newblock {\em Journal of Topology}, 6(3):777--791, 2013.

\bibitem[Par14]{parlier2014simple}
Hugo Parlier.
\newblock Simple closed geodesics and the study of teichm{\"u}ller spaces.
\newblock {\em Handbook of Teichm{\"u}ller Theory, Volume IV}, pages 113--134,
  2014.

\bibitem[Sch93]{schmutz1993reimann}
Paul Schmutz.
\newblock Reimann surfaces with shortest geodesic of maximal length.
\newblock {\em Geometric \& Functional Analysis GAFA}, 3(6):564--631, 1993.

\bibitem[Sch94]{schmutz1994systoles}
Paul Schmutz.
\newblock Systoles on riemann surfaces.
\newblock {\em manuscripta mathematica}, 85(1):429--447, 1994.

\bibitem[Sch98]{schaller1998geometry}
Paul Schaller.
\newblock Geometry of riemann surfaces based on closed geodesics.
\newblock {\em Bulletin of the American Mathematical Society}, 35(3):193--214,
  1998.

\bibitem[Sch99]{schaller1999systoles}
Paul~Schmutz Schaller.
\newblock Systoles and topological morse functions for riemann surfaces.
\newblock {\em Journal of Differential Geometry}, 52(3):407--452, 1999.

\bibitem[Sta15]{stanley2015catalan}
Richard~P Stanley.
\newblock {\em Catalan numbers}.
\newblock Cambridge University Press, 2015.

\bibitem[Thu86]{thurston1986spine}
William Thurston.
\newblock A spine for teichm{\"u}ller space.
\newblock {\em preprint}, 1986.

\end{thebibliography}


\begin{thebibliography}{Akr93}

		\bibitem[Akr03]{akrout2003singularites}
				Hugo Akrout.
				\newblock Singularit\'es topologiques des systoles g\'en\'eralis\'ees.
				\newblock {\em Topology}, 42, (2003) 291-308.

		\bibitem[BGW18]{bgw}
				Sheng Bai Yue Gao and Shicheng Wang.
				\newblock Systoles of hyperbolic surfaces with big cyclic symmetry.
				\newblock {\em arXiv preprint arXiv:1812.05341}, 2018.

		\bibitem[Bav92]{bavard1992systole}
				Christophe Bavard.
				\newblock La systole des surfaces hyperelliptiques.
				\newblock {\em Prepubl. Ec. Norm. Sup. Lyon}, 71, 1992, unpublished.

		\bibitem[BMM16]{bmm}
				Werner Ballmann, Henrik Matthiesen, and Sugata Mondal. 
				\newblock Small eigenvalues of closed surfaces.
				\newblock {\em Journal of Differential Geometry} 103.1: 1-13, 2016 .

		\bibitem[BMM18]{bmm18}

				Werner Ballmann, Henrik Matthiesen, and Sugata Mondal. 
				Small eigenvalues of surfaces - old and new
				ICCM Notices 6 (2018), no. 2, 9-24. 

		\bibitem[Bou19]{fortier2019hyperbolic}
				Maxime-Fortier, Bourque. 
				\newblock Hyperbolic surfaces with sublinearly many systoles that fill.
				\newblock {\em arXiv preprint}, arXiv:1904.01945 (2019). 

		\bibitem[BR18]{fortier2019hyperbolic}
				Maxime Fortier Bourque and Kasra Rafi. .
				\newblock Local maxima of the systole function. 
				\newblock {\em arXiv preprint}, arXiv:1807.08367, 2018.

		\bibitem[BS94]{buser1994period}
				Peter Buser and Peter Sarnak.
				\newblock On the period matrix of a riemann surface of large genus (with an
				appendix by jh conway and nja sloane).
				\newblock {\em Inventiones mathematicae}, 117(1):27--56, 1994.

		\bibitem[Bus10]{buser2010geometry}
				Peter Buser.
				\newblock {\em Geometry and spectra of compact Riemann surfaces}.
				\newblock Springer Science \& Business Media, 2010.





		\bibitem[CL15]{chen15systoles}
				Lizhi Chen and Weiping Li. "Systoles of surfaces and 3-manifolds." Geometry and Topology of Submanifolds and Currents 646 (2015): 61-80.

		\bibitem[CK03]{ck03universal}
				C. B. Croke and M. Katz, Universal volume bounds in Riemannian manifolds, Surveys
				in differential geometry, Vol. VIII (Boston, MA, 2002), Surv. Differ. Geom., vol. 8, Int.
				Press, Somerville, MA, (2003), 109–137. 

		\bibitem[Gro83]{gromov83}
				M. Gromov, Filling Riemannian manifolds, J. Differential Geom. 18 (1983), no. 1, 1–147.

		\bibitem[Hu]{Hu} A. Hurwitz, {\it \"Uber algebraische Gebilde mit eindeutigen
				Transformationen in sich}, Math. Ann. 41 (1893), 403-442


		\bibitem[Jen84]{jenni1984ersten}
				Felix Jenni.
				\newblock {\"U}ber den ersten eigenwert des laplace-operators auf
				ausgew{\"a}hlten beispielen kompakter riemannscher fl{\"a}chen.
				\newblock {\em Commentarii Mathematici Helvetici}, 59(1):193--203, 1984.

		\bibitem[Ji14]{ji14well}
				Lizhen, Ji. "Well-rounded equivariant deformation retracts of Teichmüller spaces." L’Enseignement Mathématique 60.1 (2014): 109-129. 

		\bibitem[KSV07]{katz2007logarithmic}
				Mikhail~G Katz, Mary Schaps, Uzi Vishne,
				\newblock Logarithmic growth of systole of arithmetic riemann surfaces along
				congruence subgroups.
				\newblock {\em Journal of Differential Geometry}, 76(3):399--422, 2007.


		\bibitem[Mon14]{mon}
				S. Mondal, Systole and $\lambda_{2g-2}$ of closed hyperbolic surfaces of genus g. Enseign.
				Math. 60 (2014) no. 2, 1-23.

		\bibitem[Mum71]{munf}
				D. Mumford, A remark on Mahler’s compactness theorem, 
				{\em Proceedings of the American Mathematical Society}
				28 (1971),
				289–294.


		\bibitem[Par14]{parlier2014simple}
				Hugo Parlier.
				\newblock Simple closed geodesics and the study of teichm{\"u}ller spaces.
				\newblock {\em Handbook of Teichm{\"u}ller Theory, Volume IV}, pages 113--134,
				2014.


		\bibitem[Pet18]{petri2018hyperbolic}
				Bram Petri.
				\newblock Hyperbolic surfaces with long systoles that form a pants
				decomposition.
				\newblock {\em Proceedings of the American Mathematical Society},
				146(3):1069--1081, 2018.

		\bibitem[PW15]{petri2015graphs}
				Bram Petri and Alexander Walker.
				\newblock Graphs of large girth and surfaces of large systole.
				\newblock {\em arXiv preprint arXiv:1512.06839}, 2015.


		\bibitem[Sch93]{schmutz1993reimann}
				P~Schmutz.
				\newblock Reimann surfaces with shortest geodesic of maximal length.
				\newblock {\em Geometric \& Functional Analysis GAFA}, 3(6):564--631, 1993.

		\bibitem{Wol08}{wolpert08behavior}
				Scott A, Wolpert. 
				Behavior of geodesic-length functions on Teichmüller space.
				{\em Journal of differential geometry }
				79.2 (2008): 277-334. 

		\bibitem[Wol17]{wolpert17}
				S. Wolpert,
				Equiboundedness of the Weil-Petersson metric,
				{\em Transactions of the American Mathematical Society}
				369 (2017), 5871-5887. 

		\bibitem[Wu19]{wu19growth}
				Yunhui Wu,
				Growth of the Weil-Petersson inradius of moduli space
				Annales de l'Institut Fourier, 69 (2019), no.3, 1309-1346.

		\bibitem[WWZZ13]{wang2013embedding}
				Chao Wang, Shicheng Wang, Yimu Zhang, and Bruno Zimmermann.
				\newblock Embedding finite group actions on surfaces over $ S^3$.
				\newblock {\em Topology and Its Applications}, 160(16):2088--2103, 2013.

		\bibitem[WWZZ15]{wang2015embedding}
				Chao Wang, Shicheng Wang, Yimu Zhang, and Bruno Zimmermann.
				\newblock Embedding surfaces into $ S^3$ with maximum symmetry.
				\newblock {\em Groups, Geometry, and Dynamics}, 9(4):1001--1045, 2015.



\end{thebibliography}

\end{document}